\newtheorem{theorem}{Theorem}[section]
\newtheorem{proposition}[theorem]{Proposition}
\newtheorem{lemma}[theorem]{Lemma}
\newtheorem{remark}[theorem]{Remark}
\renewcommand{\d}{\mathrm d}
\renewcommand{\P}{\mathbb P}
\newcommand{\E}{\ensuremath{\mathbb{E}}}
\newcommand{\N}{\mathbb N}
\newcommand{\Z}{\mathbb{Z}}
\newcommand{\T}{\mathbb{T}}
\newcommand{\cC}{\mathcal{C}}
\newcommand{\cF}{\mathcal{F}}
\newcommand{\cI}{\mathcal{I}}
\newcommand{\cP}{\mathcal{P}}
\newcommand{\cN}{\mathcal{N}}
\newcommand{\cR}{\mathcal{R}}
\newcommand{\cS}{\mathcal{S}}
\newcommand{\cU}{\mathcal{U}}
\newcommand{\id}{1\hspace{-0,9ex}1}
\newcommand{\bfxi}{\boldsymbol{\xi}}
\newcommand{\e}{\mathrm e}
\title{Renewal contact process with dormancy}
\date{March 5, 2025}
\begin{document}

\maketitle 

\centerline{{\sc Noemi Kurt}\footnote{Goethe-Universität Frankfurt, Robert-Mayer-Straße 10, 60325 Frankfurt am Main, Germany, {\tt kurt@math.uni-frankfurt.de}}, {\sc Michel Reitmeier}\footnote{Goethe-Universität Frankfurt, Robert-Mayer-Straße 10, 60325 Frankfurt am Main, Germany, {\tt reitmeier@math.uni-frankfurt.de}}, {\sc Andr\' as T\'obi\'as}\footnote{Department of Computer Science and Information Theory, Budapest University of Technology and Economics, Műegyetem rkp. 3., H-1111 Budapest, Hungary, and HUN-REN Alfréd Rényi Institute of Mathematics, Reáltanoda utca 13--15, H-1053 Budapest, Hungary, {\tt tobias@cs.bme.hu}}}

\begin{abstract}
We consider the contact process with dormancy, where wake-up times follow a renewal process. Without infection between dormant individuals, we show that the process under certain conditions grows at most logarithmically. On the other hand, if infections between dormant individuals are possible, the process survives with positive probability even on finite graphs.
\end{abstract}

\noindent \emph{Keywords and phrases.} Contact process, renewal process, dormancy, percolation. \\
\emph{MSC 2020.} Primary 60K06, 60K35; secondary 82B43, 92D15.

\medskip

\section{Introduction and motivation}
Dormancy, referring to the ability of individuals to enter a reversible state of inactivity, is a widespread phenomenon in biological systems ranging from population genetics, epidemiology, ecology to neuroscience and oncology \cite{LdHWB21}. Its mathematical modelling has received quite some attention in the probabilistic community over the last years. However, dormancy phenomena are not restricted to biological systems. For example in \cite{Floreani}, particle systems with switching are considered, leading to interesting and sometimes surprising phenomena. The contact process with switching, with dormancy as a special case, has been introduced in \cite{BlathHermannReitmeier}. It can be seen as a toy model to investigate effects of (host) dormancy on the spread of an infection. The contact process with switching is related to the contact process in certain dynamical random environments, see \cite{LinkerRemenik, SeilerSturm}. From a mathematical perspective, particle systems with dormancy also provide an accessible framework to investigate the interplay of spatial structure with an additional temporal structure that is created by (potentially very long) periods where particles are dormant. 

\medskip

The contact process with dormancy of \cite{BlathHermannReitmeier} is a contact process in a dynamical environment, which in their case consists of an additional state being assigned to each site, that may be either active or dormant. The environment evolves autonomously and independently at each site, while the rates of the contact process depend on the activity state at the site and its neighbours. In the present paper, we adapt the contact process with dormancy  to accommodate dormancy periods with heavy tails. On the technical side, we can build on the approach in a series of papers \cite{RCP1,RCP2,RCP3,RCP_finite,M. Hilario,RCPnew} that investigate the contact process with renewals but without dormancy.
In these papers, recoveries in the graphical construction are not governed by a Poisson process, but by a renewal process with interarrival times distributed according to some law $\mu$ on $[0,\infty).$ An interesting choice for $\mu$ are power laws. Among other results, the authors prove that under some tail conditions on $\mu$ (which in particular imply that there exists no first moment) the critical value $\lambda_c$ of the infection rate for this contact process is 0 \cite{RCP1}. \cite[Section 5]{RCP3} provides an example where the same holds even though $\mu$ is attracted to a stable law of index 1. On the other hand, if any moment of order strictly larger than 1 exists, then $\lambda_c>0$, see \cite{RCP2, RCP3}. Note that \emph{a priori}, this does not necessarily imply that $\lambda_c<\infty$ also holds, i.e.\ that there is a nontrivial phase transition. But at least for certain classes of distributions $\mu$, it has been shown that that is indeed the case~\cite{RCP3,RCPnew}.

\medskip
The aim of the present paper is to include renewals into the contact process with dormancy. More precisely, renewals will be included in the switches from the dormant to the active state, whereas the infections and recoveries will have the usual Markovian structure with exponential waiting times. The recovery rates and the infection rates then depend on the activity state of individuals and (in the case of infection rates) their neighbours, while the activity state (active/dormant) of an individual
evolves autonomously from the infection and independently of other individuals. Depending on the parameter choices, we can find regimes with quite different behaviour from the classical contact process. In fact, we find that if no infection happens between dormant individuals and if the dormancy periods approximately have power-law tails of exponent $\alpha \in (0,1)$,
the infection grows at most logarithmically. This strengthens a result of \cite{M. Hilario}, where at most sub-linear growth was found in a related model. Our argument relies on the Dynkin--Lamperti Theorem.  On the other hand, if infections between dormant individuals are possible, we can show that the process survives with positive probability 
even on finite graphs under suitable conditions.\\

To put the results into perspective, we recall that the classical contact process displays linear growth in the survival regime, see \cite{Durrett}. This is also true at least under certain conditions in the contact process with dormancy of \cite{BlathHermannReitmeier}. Indeed, the heavy tailed dormancy periods are responsible for the slowdown of the infection, since they lead to a blocking effect on a long time scale. In particular, our result does not apply to the Markovian case \cite{BlathHermannReitmeier} or to related Markovian models with blocking sites \cite{LinkerRemenik,Velasco}. 

\medskip
While our general strategies follow \cite{RCP1,RCP_finite} and \cite{M. Hilario}, the additional switches between active and dormant add another layer of complexity. Moreover, we are not aware of any coupling between our model and the ones in  \cite{RCP1,RCP_finite,M. Hilario} due to the fact that we cannot couple a random subset of Poisson points (i.e.\ recovery/infection symbols) occurring while the environment is dormant with a non-trivial renewal process.
We build the model on the usual graphical representation, however, now the events in this representation depend on the activity state. Therefore, we need to apply particular care when constructing suitable grids or events for the percolation arguments, which often require additional structure.

\section{Model and main results}
At first, we recall that a general renewal process $\cS=\{S_n\}_{n\geq 0}$ with interarrival distribution $\mu$ on $[0,\infty)$ is a set of renewal points satisfying $S_n:=T_1+\ldots+ T_n$, where the interarrival times $T_i$ are independent with law $\mu$. For $\cS$ we introduce the \textit{current time} $C^{\cS}(t)$ and the \textit{excess time} $E^{\cS}(t)$ by
\begin{equation*}
C^{\cS}(t):=t-S_{N(t)}\quad\text{and}\quad E^{\cS}(t):=S_{N(t)+1}-t
\end{equation*}
where $N(t):=\sup\{n\geq0: S_n\leq t\}$ is the number of renewals up to time $t$.

\medskip
To define our model let $G=(V,E)$ be a connected graph of bounded degree. Usually, we will consider the Euclidean lattice $(\Z^d,\E^d)$, where $\E^d=\{\{x,y\}:|x-y|_1=1\}$ is the set of nearest neighbour edges.
Our process $(\bfxi_{t})_{t\geq 0}$ will have state space $F^V:=(\{0,1\}\times \{a,d\})^{V},$ where $0$ stands for healthy, $1$ for infected, $a$ for active and $d$ for dormant. Thus, the tuple $\bfxi_t(x)\in F$ describes the local state of site $x$ at time $t$. We construct our process via the usual graphical representation of  Harris \cite{H78}. That is, for each vertex in $V$ we have a Poisson process with rate $\delta>0$ which is the recovery rate for the active individuals. We assume that no recovery is possible in the dormant state. For each edge we have Poisson processes with rates $\lambda_{aa}, \lambda_{ad},\lambda_{da}, \lambda_{dd} \geq 0$ which are the infection rates between the individuals of different types. The switches from dormant to active for every vertex are given by a renewal process with interarrival distribution $\varrho$ which has heavy tails and satisfies some regularity assumptions we specify later. Moreover, the switches from active to dormant happen according to a Poisson process with rate $\sigma>0$. To be more precise, we assume that we have the following independent processes: 
\begin{itemize}
	\item Recovery events: Poisson processes $\{\cR_{x}\}, x\in V, $ with rate $\delta$.
	\item Infections: Poisson processes $\{\cI_{e, \tau}\},~e\in E,
	\tau \in \{a,d\}^2$, with rate $\lambda_\tau\geq 0$. 
	\item Switches from active to dormant: Poisson processes $\{\cU_{x}\}, x\in V$, with rate $\sigma$.
	\item Switches from dormant to active: Renewal processes $\{\mathcal{S}_x\},x\in V$ with interarrival distribution $\varrho$.
\end{itemize} 
We denote the current and excess times of these renewal (Poisson) processes by $C_x^\delta,C_e^{\lambda_\tau},C_x^\sigma,C_x^\varrho$ and $E_x^\delta,E_e^{\lambda_\tau},E_x^\sigma,E_x^\varrho$, respectively.
Given these processes, the \emph{contact process with renewal dormancy} is constructed in the usual way, given some initial state $\xi\in F^V$. First, since the activity state for every site evolves autonomously from the infection and independent from all other sites, we define
\begin{equation*}
    \bfxi_t(x)_2=a\quad\text{if and only if }\quad C_x^{\varrho}(t)<C_x^{\sigma}(t),~\text{or}~C_x^{\varrho}(t)=C_x^{\sigma}(t)=t~\text{and}~\xi(x)_2=a,
\end{equation*}
where $\bfxi_t(x)_i$ denotes the $i$th component of $\bfxi_t(x)$. Note, the first condition says that the last switch for the activity of $x$ was from $d$ to $a$ and the second one covers the case where $x$ is initially active and no activity update occurs for $x$ until time $t$. Second, for $s<t$ and $x,y\in V$ we define (analogously to~\cite[Section 2.1]{BlathHermannReitmeier}) 
an \textit{infection path} from $(x,s)$ to $(y,t)$ as a c\`adl\`ag function $\gamma:[s,t]\to V$ with $\gamma_s=x$ and $\gamma_t=y$ such that:
\begin{itemize}
	\item The path can only jump to an adjacent vertex if there is an infection matching the activity types of the involved vertices, i.e.\ If $v=\gamma_{u-}\neq w=\gamma_{u}$ for some $u\in [s,t]$ then $e=\{v,w\}\in E$ and $u\in \cI_{e,\tau}$ with $\tau=(\bfxi_{u}(v)_2, \bfxi_{u}(w)_2)$.
	\item The path is not allowed to pass recovery symbols while it stays on an active site, i.e.\  If $u\in \cR_{\gamma_u}$ for some $u\in [s,t]$ then $\bfxi_u(\gamma_u)_2=d$.
\end{itemize}
By $(x,s)\rightsquigarrow(y,t)$ we denote the event that there exists an infection path from $(x,s)$ to $(y,t)$. Finally, the infected sites at time $t$ are just the ones reached by some infection path from an initially infected site:
\begin{equation*}
    \bfxi_t(x)_1=1\quad\text{if and only if we have }(y,0)\rightsquigarrow(x,t)~\text{for some }y\text{ with }\xi(y)_1=1.
\end{equation*}
The case when the processes $\{ \mathcal S_x \}$, $x \in V$ are also Poisson processes, i.e.\ the interarrival distribution $\varrho$ is exponential, was studied in~\cite{BlathHermannReitmeier}. 

\medskip
If we want to specify the initial configuration $\xi$ we write it in the superscript $\bfxi_t^\xi$. Often it will be convenient to describe the initial configuration $\xi$ by two sets $I,A\subset V$, which describe the set of initially infected, respectively active sites. Therefore we also write $\bfxi_t^{I,A}$.
Moreover, the graphical construction implies a universal coupling for all initial configurations which yield the same monotonicity, additivity and attractivity properties stated in \cite[Theorem 2.2]{BlathHermannReitmeier} for the Markovian set-up without renewals. It should be noted that this monotonicity holds with respect to the order $\preceq$ on $\{0,1\}\times\{a,d\}$ defined through \begin{equation*}
    (x_1,x_2)\preceq(y_1,y_2)\quad\text{if and only if}\quad x_1\leq y_1~\text{and}~x_2=y_2.
\end{equation*}In particular, the order $\preceq$ is not a total order on $\{0,1\}\times\{a,d\}$, and two configurations $\eta,\xi\in F^V$ can only be compared if the activity states for all sites are identical.

\medskip

Let $\P$ be the joint law of all renewal and Poisson processes we need for our construction on some suitable common probability space $\Omega$. Later it will be useful to consider it as a product measure $\P=\P^\varrho\times \P'$ on $\Omega^\varrho\times \Omega'$, where $\P^\varrho$ is the joint law of the renewal processes $(\cS_x)_{x\in V}$ and $\P'$ is the joint law over all other point processes in the construction. One question of interest is the survival of the infection, hence we define the extinction time as
\begin{equation*}
\tau^{I,A}:=\inf\{t\geq0:\bfxi_t^{I,A}(x)_1=0~\text{for all}~x\in V\}.
\end{equation*}
We usually start with all sites active and only one infected, since in view of the heuristics that most individuals are dormant in the long run (due to long dormancy periods, which are bound to occur under our conditions), this is the extreme case (in terms of extinction). Therefore, we use the shorthand notation $\tau^x:=\tau^{\{x\},V}$ and $\bfxi_t^x:=\bfxi_t^{\{x\},V}$.
We say the contact process survives if the survival probability is positive, i.e.\ $\P(\tau^x=\infty)>0$ for some $x\in V$.
In the case when $G$ equals $\mathbb Z^d$ with the nearest-neighbour edges, apart from survival we are also interested in the growth of the process and therefore define the range of the process as
\begin{equation*}
    r_t:=\max\{\Vert x \Vert_\infty  \colon \bfxi_t^{0}(x)_1=1\}.
\end{equation*}
We state three assumptions regarding the measure $\varrho$ which are formulated with respect to the corresponding renewal process $\cS$. The first one (G) originates from~\cite{RCP1}, while the last two (S) and (S$^*$) come from~\cite{RCP_finite}.
\begin{enumerate}
	\item[(G)] (Gap condition.) There exist $t^*>0$ and $\varepsilon^*>0$ such that
	$$\P(\mathcal{S}\cap[t,t+t^{\varepsilon^*}]\neq \emptyset)\leq t^{-\varepsilon^*}\quad\text{ for all }t\geq t^*.$$
	\item[(S)] 
    The law $\varrho$ satisfies $\varrho([t,\infty))=L(t)t^{-\alpha}$ for some  $\alpha \in (0,1)$ and some slowly varying function $L(t)$.
	\item[(S$^*$)] The law $\varrho$ satisfies (S) and $\alpha\in (1/2,1)$ or, if $\alpha\leq 1/2$, then the regularity assumptions of \cite[Theorem~1.4]{Caravenna_Doney} are satisfied.
	
\end{enumerate}

\begin{remark}
	It is not straightforward to check for a given measure $\varrho$ if the corresponding renewal process $\cS$ satisfies the gap condition (G). However, in \cite[Proposition~7]{RCP1} it is shown that (G) is satisfied if the tail of $\varrho$ is heavier than $t^{-\alpha}$ for some $\alpha<1$ and some regularity conditions hold  \cite[(A)--(C) in Theorem 1]{RCP1}. Moreover, one can check that every measure $\varrho$ satisfying (S) also satisfies (A)-(C) of \cite{RCP1} and thus (G). Note that condition (S$^*$) implies that $\varrho$ satisfies Erickson's Strong Renewal Theorem \cite[Theorem~1]{Erickson} or its extension \cite[Theorem~1.4]{Caravenna_Doney}.
\end{remark}

Let us briefly discuss the parameter regimes, identifying those that have the potential to lead to new effects that stem from dormancy. 
As already mentioned, we assume that the recovery rate $\delta_d$ of dormant individuals is zero, otherwise we would just obtain a process that can be coupled between two classical contact processes without dormancy in a way that implies a nontrivial phase transition. Moreover, based on the heuristics that most sites are dormant in the long run, we expect this process to have the same behaviour as the usual contact process with infection rate $\lambda_{dd}$ and recovery rate $\delta_d$, not displaying any effect of the heavy tailed dormancy.

\medskip
Of particular interest is the case when some of the infection rates involving at least one dormant individual, i.e.\ $\lambda_{ad}, \lambda_{da}$ or $\lambda_{dd}$, are equal to zero. If all infection rates are positive, the effect of dormancy is essentially the same as having recoveries following a renewal process with heavy tailed interarrival distribution, because then dormant individuals can only recover after waking up, while they spread the infection to all other individuals at a uniformly positive rate. To make this argument precise, one has to consider the effective recovery symbols for $x$, i.e.\ the recovery symbols arriving during an active phase of the site $x$, which we denote by  $\cR_{x}^{\varrho,\delta}$. Clearly,  the set $\cR_{x}^{\varrho,\delta}$ is no renewal process, but a simple computation shows that  $\cR_{x}^{\varrho,\delta}$ also fulfils the gap  condition (G) if $\varrho$ satisfies it. Since in \cite{RCP1} it is shown that the renewal contact process with any positive infection rate survives with positive probability if the set of recovery symbols satisfies (G), we can deduce the same for our process.  Moreover, we expect that the results of \cite{RCP2} (positive critical intensity under conditions that are a bit stronger than having a first moment) carry over easily.

\medskip
In the situation of $\lambda_{dd}=0$ and $\lambda_{aa},\lambda_{ad},\lambda_{da}>0$, clusters of dormant individuals (which will occur sufficiently frequently due to long dormancy periods) will preserve the infection for long times, but still cannot recover. Simulations suggest that in this case we may encounter survival of the process but with only sub-linear growth of the infected region. Indeed, we are able to prove at most logarithmic growth under some conditions:
\begin{theorem}\label{theorem_sublinear_final}
	Let $G=(\Z^d,\E^d)$ be the Euclidean lattice, $\lambda_{dd}=0$ and assume $\varrho$ satisfies condition (S), then the process $r_t$ has at most logarithmic growth, i.e.\ for every $\varepsilon>0$ we have
	\begin{equation}
	\limsup_{t\to\infty}\frac{r_t}{(\ln{t})^{(1+\varepsilon)}}=0\quad\text{almost surely.}\label{log_wachstum}
	\end{equation}
	Moreover, if $\varrho$ satisfies (G) then $r_t$ growth at most sub-linearly, i.e.\ for every 
	$\varepsilon\in(0,\varepsilon^*)$ we have
	\begin{equation}
	\limsup_{t\to\infty}\frac{r_t}{t^{1-\varepsilon}}=0\quad\text{almost surely.}\label{sublinear_growths}
	\end{equation}
	
\end{theorem}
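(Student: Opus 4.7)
\textbf{Sub-linear growth under (G).} My plan for~\eqref{sublinear_growths} is a first-moment/union-bound argument in the spirit of~\cite{RCP1, RCP_finite, M. Hilario}. The event $\{r_t\geq R\}$ forces a nearest-neighbour infection path of length at least $R$ in the graphical construction; because $\lambda_{dd}=0$, each of the $R$ edge-crossings must occur at a moment when at least one endpoint is active. Conditioning on the renewal clocks $\{\cS_x\}$ and dormancy clocks $\{\cU_x\}$, a standard ordered-crossings bound gives, for any self-avoiding path $\pi=(x_0,\ldots,x_R)$,
\[
\P(\pi\text{ is an infection path}\mid \{\cS_x,\cU_x\}) \leq \frac{(\lambda^*)^R}{R!}\prod_{i=1}^R\bigl(T^a_{x_{i-1}}+T^a_{x_i}\bigr),
\]
with $\lambda^*=\max(\lambda_{aa},\lambda_{ad},\lambda_{da})$ and $T^a_x$ the total active time of $x$ in $[0,t]$. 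Iterating (G) over the $\lceil t^{1-\varepsilon^*}\rceil$ windows of length $t^{\varepsilon^*}$ in $[0,t]$, and union-bounding over sites $x$ with $\|x\|_\infty\leq R$, shows that with $\P$-probability $1-o(1)$ every such $x$ has $O(t^{1-\varepsilon^*})$ wake-ups in $[0,t]$, hence $T^a_x=O(t^{1-\varepsilon^*})$. Inserting this into the per-path bound, summing over the $\leq(2d)^R$ length-$R$ paths and applying Stirling yields $\P(r_t\geq R)\to 0$ stretched-exponentially whenever $R=t^{1-\varepsilon}$ with $\varepsilon\in(0,\varepsilon^*)$; Borel--Cantelli along $t_n=2^n$ then gives~\eqref{sublinear_growths}.

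\textbf{Logarithmic growth under (S).} The idea is that, by the Dynkin--Lamperti theorem, at the first-passage time $\tau_r:=\inf\{t\geq 0:r_t\geq r\}$ each yet-untouched vertex $z$ has excess time $E_z^\varrho(\tau_r)$ comparable to $\tau_r$, making further progress slow. Since the infection dynamics reads off only the renewals of already-infected vertices, the product decomposition $\P=\P^\varrho\times\P'$ ensures that for a vertex $z$ not reached by time $\tau_r$ the clock $\cS_z$ remains independent of $\cF_{\tau_r}$; applying DL conditionally gives
\[
\P\bigl(E_z^\varrho(\tau_r)\leq c\tau_r\,\bigm|\,\cF_{\tau_r}\bigr)\leq C c^{1-\alpha}\qquad(c\in(0,1])
\]
uniformly in $r$, and analogously for neighbours of $z$. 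The plan is then to deduce a one-step bound $\P(\tau_{r+1}\leq(1+c_r)\tau_r\mid\cF_{\tau_r})\leq q_r$ with $(q_r)$ summable and $(c_r)$ chosen so that $\sum_{k\leq r}\log(1+c_k)\gtrsim r^{1/(1+\varepsilon)}$; Borel--Cantelli would then yield $\tau_r\geq \exp(\Omega(r^{1/(1+\varepsilon)}))$ eventually, which upon inversion gives~\eqref{log_wachstum}.

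\textbf{Main obstacle.} The serious difficulty lies in the log bound: the candidate frontier at time $\tau_r$ has a priori size $O(r^{d-1})$, and the naïve union bound $r^{d-1}c_r^{1-\alpha}$ cannot simultaneously be summable and permit $\sum c_k$ to diverge fast enough for $d\geq 2$. Overcoming this requires either an a priori bound on the size of the infected cluster at the times $\tau_r$ --- bootstrapped from the sub-linear estimate above --- or a coarse-graining step grouping frontier vertices into renewal-independent blocks handled jointly by a single DL estimate. Justifying DL at the random time $\tau_r$ is itself subtle, and is resolved by the product structure $\P=\P^\varrho\times\P'$: $\tau_r$ is measurable with respect to $\P'$ and the renewal processes at the already-infected vertices, so the renewals at fresh frontier vertices remain independent.
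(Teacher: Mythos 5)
Your proposal takes a genuinely different route from the paper, and both halves have gaps that I do not see how to close without essentially reconstructing the paper's main device.

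For the sub-linear bound under (G), the per-path estimate carrying a $1/R!$ factor is not justified. Conditionally on the activity windows $B_i=A_{x_{i-1}}\cup A_{x_i}$, the expected number of ordered crossing times is
\[
(\lambda^*)^R\int_{0<s_1<\cdots<s_R<t}\prod_{i=1}^R\mathbf 1[s_i\in B_i]\,\d s_1\cdots\d s_R,
\]
and this ordered volume equals $\prod_i|B_i|$ with \emph{no} factorial gain whenever the windows $B_i$ are pairwise disjoint and temporally ordered along the path -- which is a perfectly typical configuration when the path's sites wake up at separated times, precisely the regime enforced by (G). Without the $1/R!$, summing over $(2d)^R$ paths does not converge. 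In addition, iterating (G) only bounds the \emph{expected} number of wake-ups; you still need a concentration step to control $T^a_x$ uniformly over $\sim R^d$ sites.

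For the logarithmic bound under (S), the frontier-size gap you flag is indeed fatal for $d\geq 2$ as sketched. More seriously, the independence assertion -- that $\cS_z$ remains independent of $\cF_{\tau_r}$ for vertices $z$ not yet infected -- is false: whether $z$ has been infected by time $\tau_r$ depends on the activity state of $z$ at candidate infection times (a dormant site can still be infected when $\lambda_{ad}>0$), hence on $\cS_z$. The product decomposition $\P=\P^\varrho\times\P'$ factors the renewal noise from the rest, but $\tau_r$ itself is not $\P'$-measurable, so Dynkin--Lamperti cannot be applied conditionally at $\tau_r$ in the way you propose.

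The paper circumvents both problems by coupling the process with an \emph{iterated site percolation} on a macroscopic grid of $2^d$-site cubes. A cube is declared $\ell$-bad if all its sites are dormant throughout $[t_\ell,t_{\ell+1}]$; a double layer of such cubes blocks the infection, so $\bfxi_{t_k}\subset C_k$, the $k$-th iterated cluster. Dynkin--Lamperti is invoked only at \emph{deterministic} times $t_k$, and only to verify that the geometric sequence $t_k=(1+c)^{k/2}t_0$ keeps the good-cube probability below $p_c^s/2$. The exponential decay for subcritical site percolation then handles the frontier geometry for free: Lemma~\ref{nearly_linear_grow_RN} gives $R_n\leq n(\ln n)^a$, and since $n(s)=O(\log s)$ this yields $(\ln t)^{1+\varepsilon}$ growth. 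The (G) case is the same construction with the slower sequence $t_{k+1}=t_k+t_k^{\varepsilon^*}$. If you want to repair your direct approach, the missing ingredient is precisely this blocking/percolation structure.
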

\begin{remark}
	The logarithmic growth in \eqref{log_wachstum} also applies to the Richardson$(\mu)$ model (i.e.\ a contact process without recovery where the infections occur according to a renewal process with interarrival distribution $\mu$) with $\mu$ satisfying (S). In particular, our result improves the at most sub-linear growth given in \cite[Corollary~4.3]{M. Hilario}
\end{remark}
Unfortunately, we are unable to provide survival criteria for the Euclidean lattice. However, on $d$-regular trees, denoted by $\T_d$, a simple coupling with a Galton-Watson process ensures that the process survives if $d\geq 3$.
\begin{proposition}\label{Proposition_survival}
	Let  $G=\T_d$ with $d\geq3$, the parameters $\delta,\sigma,\varrho>0$ be fixed and $\lambda=\lambda_{aa}=\lambda_{ad}$ sufficiently large, then the process survives, i.e.\ $\P(\tau^x=\infty)$ for some $x \in \mathbb T_d$. \color{black}
\end{proposition}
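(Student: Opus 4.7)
The plan is to stochastically dominate the infected sub-tree from below by the progeny tree of a supercritical Galton--Watson process. I fix the initially infected site as the root of $\T_d$, orient every edge away from it (so each non-root vertex has exactly $d-1\geq 2$ children), and from the graphical construction retain only infection paths that use the orientation, i.e.\ parent-to-child. For each vertex $v$ reached in this restricted process let $\tau_v$ be its first infection time.

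The central quantity is the total time $\mathcal{T}_v$ during which $v$ is simultaneously active and in state $1$, before its first recovery. Since recoveries are Poisson at rate $\delta$ and switches to dormancy are Poisson at rate $\sigma$, each active phase of $v$ has duration $\mathrm{Exp}(\sigma+\delta)$ and ends in recovery with probability $\delta/(\sigma+\delta)$; on the complementary event the process restarts from dormancy, whose duration contributes nothing to $\mathcal{T}_v$. A short fixed-point calculation for the Laplace transform then gives $\E[e^{-s\mathcal{T}_v}] = \delta/(\delta+s)$, i.e.\ $\mathcal{T}_v \sim \mathrm{Exp}(\delta)$, \emph{regardless} of whether $v$ is active or dormant at $\tau_v$ and irrespective of the past of the renewal process $\cS_v$ (any ongoing dormant interval is absorbed into the waiting time before the first active phase).

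Since $\lambda_{aa}=\lambda_{ad}=\lambda$, at every instant when $v$ is active the effective transmission rate along each oriented edge $(v,c)$ equals $\lambda$, regardless of the state of $c$. Consequently, conditionally on $v$'s activity/recovery data, the number $Z_v$ of children infected by $v$ is $\mathrm{Bin}(d-1,\,1-e^{-\lambda\mathcal{T}_v})$, with independence across children following from the disjoint edge Poisson processes. Taking expectations,
\begin{equation*}
\E[Z_v] = (d-1)\cdot \frac{\lambda}{\lambda+\delta},
\end{equation*}
which exceeds $1$ as soon as $\lambda>\delta/(d-2)$, a condition satisfiable because $d\geq 3$. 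The data $\cR_v,\cU_v,\cS_v$ and the outgoing edge processes at $v$ are disjoint across distinct vertices, and, by the previous step, the law of $Z_v$ depends neither on $\tau_v$ nor on the state of $v$ at $\tau_v$, so $\{Z_v\}_v$ is i.i.d. The restricted infected tree therefore stochastically dominates a supercritical Galton--Watson tree, which survives with positive probability; on that event $\tau^x=\infty$.

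The step I expect to require the most care is the identification $\mathcal{T}_v\sim\mathrm{Exp}(\delta)$ uniformly in the (non-Markovian) past of $\cS_v$. Once that is in hand, the offspring computation and the independence of the $Z_v$'s follow directly from the product structure of the graphical representation.
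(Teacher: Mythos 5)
Your proof is correct, and it takes a genuinely different (and in fact sharper) route than the paper. The paper waits until a vertex $v$ is first both infected and active, and then uses the memoryless property at that instant to run a race: both children must receive an infection event before $v$ either recovers or goes to sleep. This throws away the possibility that $v$ goes dormant, later wakes up, and still infects a child; as a consequence the paper's supercriticality condition $\tfrac{2\lambda}{2\lambda+\sigma+\delta}\cdot\tfrac{\lambda}{\lambda+\sigma+\delta}>\tfrac12$ requires $\lambda$ large relative to both $\delta$ and $\sigma$. You instead aggregate the \emph{total} active-and-infected time $\mathcal{T}_v$ across all active phases until the first recovery, observe via the geometric-sum-of-exponentials identity that $\mathcal{T}_v\sim\mathrm{Exp}(\delta)$ regardless of $\sigma$ and of the renewal history, and then compute $\E[Z_v]=(d-1)\lambda/(\lambda+\delta)$, giving the cleaner, $\sigma$-free threshold $\lambda>\delta/(d-2)$. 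Both proofs rest on the same implicit observation, which you state but do not fully justify: because $\lambda_{aa}=\lambda_{ad}$, the ``effective'' infection process on an oriented edge $(v,c)$ --- the interleaving of $\cI_{(v,c),aa}$ during $c$'s active periods with $\cI_{(v,c),ad}$ during $c$'s dormant periods --- is a rate-$\lambda$ Poisson process that is independent of $(\cU_c,\cS_c)$, so that $\tau_v$ and hence $\mathcal{T}_v$ and $Z_v$ have laws not entangled with the child's own activity data, which is precisely what makes the offspring counts i.i.d.\ across generations. Spelling out this Poisson-thinning/relabelling step would close the one small gap in an otherwise tight argument; the paper avoids the issue by working on $\T_3$ and with the coarser event $A(x,t^a(x))$, at the cost of a weaker threshold.
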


\begin{remark} A more extreme case is $\lambda_{aa}>0,  \lambda_{ad}=\lambda_{da}=\lambda_{dd}=0,$ where dormant individuals neither recover, nor infect anybody or get infected themselves. By our previous reasoning that in the long run most individuals are dormant, we expect the process to die out almost surely since infections are extremely unlikely: The probability that any infected and active individual has an active neighbour at time $t$ goes to 0 as $t \to \infty$ due to the heavy tails of $\varrho$. This is supported by simulations and we refrain from a formal proof. \end{remark}

Another interesting set-up occurs due to the non-Markovianity when $\lambda_{dd}>0$. In this case we can show similar to \cite{RCP_finite} that our process can even survive on finite graphs if a certain cardinality condition on the nodes is satisfied. 

\begin{theorem}\label{theorem:finite_survival_ext}
	Let $G=(V,E)$ be any connected graph and assume $\varrho$ satisfies condition (S$^*$) with some $\alpha\in (0,1)$, then we have for all $\lambda_{dd},\delta,\sigma>0$, $\lambda_{aa},\lambda_{ad},\lambda_{da}\geq 0$ and  $x\in V$ that
	\begin{align}
	\P(\tau^x=\infty)>0 \quad\quad&\text{if}~~|V|>\frac{1}{1-\alpha},\label{card_ass_survival}\\
	\P(\tau^x=\infty)=0 \quad\quad&\text{if}~~|V|<2+\frac{2\alpha-1}{(1-\alpha)(2-\alpha)}.\label{card_ass_ext}
	\end{align}
\end{theorem}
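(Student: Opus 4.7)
The proof rests on a sharp asymptotic for the probability that every vertex is simultaneously active at time $t$. Fix $x\in V$ and set $a(t):=\P(\bfxi_t(x)_2=a)$; by the graphical construction, $x$ is active at $t$ iff no a$\to$d Poisson point has fallen since the last d$\to$a renewal, so conditioning on the age $C_x^\varrho(t)$ gives
\[
a(t)=\E\bigl[e^{-\sigma C_x^\varrho(t)}\bigr].
\]
Under (S$^*$), the strong renewal theorem of \cite{Erickson} (or its extension \cite{Caravenna_Doney}) supplies the density of $C_x^\varrho(t)$ and yields $a(t)\sim c_\alpha\,t^{\alpha-1}$, up to slow variation, for some $c_\alpha>0$. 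Since the activity clocks at distinct vertices are independent, we obtain
\[
\pi(t):=\P(\text{every vertex of }V\text{ is active at time }t)=a(t)^{|V|}\sim c_\alpha^{|V|}\,t^{|V|(\alpha-1)}.
\]

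For \eqref{card_ass_survival}, the hypothesis $|V|>1/(1-\alpha)$ gives $|V|(\alpha-1)<-1$, so $\int_0^\infty \pi(t)\,dt<\infty$ and hence the Lebesgue measure of the all-active time set is a.s.\ finite. Because the recovery symbols $\cR_x$ only act while $x$ is active, no existing infection can be lost during any interval in which at least one vertex is dormant; the infection is therefore at risk only on the a.s.-finite all-active set. Combined with $\lambda_{dd}>0$, a coupling argument yields $\P(\tau^x=\infty)>0$: with positive probability, an initial long all-dormant window (guaranteed by the heavy tail of $\varrho$) lets the infection spread via d--d edges to every vertex, after which the finite remaining all-active time is insufficient to destroy all $|V|$ infections with probability one. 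A Borel--Cantelli or inductive step then delivers positive survival probability.

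For \eqref{card_ass_ext}, the threshold is strictly smaller than $1/(1-\alpha)$ when $\alpha\in(1/2,1)$, so mere non-integrability of $\pi(t)$ does not suffice. The plan is to show that almost surely there exist infinitely many "killing windows" on which every vertex is active throughout an interval of length $\tau$ long enough for all currently infected vertices to recover before any a$\to$d event. A refined strong-renewal calculation of the joint probability that every vertex has age of order $1$ and experiences no a$\to$d Poisson point throughout $[t,t+\tau]$ produces the relevant asymptotic; a second-moment/Borel--Cantelli argument, controlling correlations between distant windows via the renewal structure, then yields infinitely many such windows a.s.\ in the indicated regime. Each killing window extinguishes the infection with uniformly positive conditional probability (using $\delta>0$), so $\P(\tau^x=\infty)=0$.

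The main obstacle is the second-moment estimate underlying the Borel--Cantelli step in the extinction half, which is where the specific exponent $2+(2\alpha-1)/((1-\alpha)(2-\alpha))$ arises; this follows the template of \cite{RCP_finite}, adapted to accommodate the interplay between the heavy-tailed d$\to$a renewal process and the competing rate-$\sigma$ Poisson clock for a$\to$d switches, and is the most technically delicate part of the argument.
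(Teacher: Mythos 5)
Your calculation $a(t)=\E[e^{-\sigma C^\varrho_x(t)}]\sim c_\alpha\,t^{\alpha-1}$ and the observation that $|V|>1/(1-\alpha)$ is exactly the summability threshold for $\pi(t)=a(t)^{|V|}$ are correct and do identify the same numerology that drives the paper's survival result. However, the argument you build on top of it has gaps that are fatal rather than cosmetic.

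In the survival half, the claim ``no existing infection can be lost during any interval in which at least one vertex is dormant'' is false. The recovery process $\cR_x$ is blocked only when $x$ itself is dormant; if $x$ is infected and active it can recover regardless of whether some other vertex $y$ happens to be dormant. What one actually needs is a vertex that is simultaneously \emph{infected} and \emph{dormant}, and your Lebesgue-measure argument does not track the infection state at all. Moreover, even granting the (false) claim, finiteness of the all-active set gives $\E[\mathrm{Leb}\{t:\text{all active}\}]<\infty$, hence an a.s.\ finite random set; that does not by itself yield positive survival probability --- the infection could still die on that finite set with probability one. The paper's proof closes exactly this gap by constructing a deterministic time grid $t_n$ and four interlocking families of events $A_n,B_n,C_n,D_n$ (one site dormant through $[t_n,t_n+(n+1)^\varepsilon]$; a sub-window where all sites stay dormant; all sites go to sleep at the start of that sub-window; the infection re-covers the whole graph along a spanning path during the second half) and checking that each family has positive probability of occurring for all large $n$ simultaneously. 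The new ingredient over \cite{RCP_finite}, handled in Lemma~4.4, is precisely the event $\hat D_m$ that all sites go to sleep quickly, which has no analogue in your sketch.

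In the extinction half, your scheme of ``killing windows'' on which \emph{all} vertices are simultaneously active for a duration $\tau$ is a genuinely different and strictly more demanding mechanism than the one in the paper. The paper, following \cite{RCP_finite}, constructs a random time sequence $(S_n)$ so that in each interval every vertex sees at least one wake-up symbol at its own (typically distinct) time $S_{n-1}+X_{n,x}$, and then declares the interval ``bad'' if each vertex recovers within one time unit of its own wake-up and before its next go-to-sleep, and no infection event occurs. This allows the clean-up to be staggered across vertices and across the interval, and the threshold $2+(2\alpha-1)/((1-\alpha)(2-\alpha))$ comes out of the proof that $X_n$ does not escape to infinity (\cite[Prop.\ 3.1--3.3]{RCP_finite}), not from any second-moment estimate of a simultaneous-activity indicator. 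Your second-moment/Borel--Cantelli plan is the entire technical content of your extinction argument, and you have not indicated how it would produce that specific constant; there is no reason to expect it to. As written, both halves of your proposal rest on missing or incorrect key steps and do not recover the stated theorem.
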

\begin{remark}\label{rem:2.6}
	The survival result also holds in a wider framework, where \emph{general} recovery symbols (which yield a recovery in all phases) and \emph{active} recovery symbols given by renewal processes with interarrival distribution $\mu$ and $\mu_a$, respectively, are allowed.  In this setting, \eqref{card_ass_survival}
	holds for all $\lambda_{dd},\sigma>0$, $\lambda_{aa},\lambda_{ad},\lambda_{da}\geq 0$ and $x\in V$ if $\mu$ and $\varrho$ satisfy condition (S$^*$) for some $\alpha_1$ and $\alpha_2$, respectively, $0<\alpha:=\alpha_1+\alpha_2<1$ and $\mu_a$ fulfills  
	\begin{equation*}\label{cond_a_recovery}
	\P(E^{\mu_a}(t)>s)>0\quad\text{for all }s,t>0.
	\end{equation*}
   However, since the proof of Theorem \ref{theorem:finite_survival_ext} carries over directly and no new arguments are necessary, we omit the proof. 
\end{remark}

\medskip

The remainder of the paper is organized in two additional sections. Section 3 considers the case where $\lambda_{dd}=0$, and Theorem \ref{theorem_sublinear_final} as well as Proposition \ref{Proposition_survival} are proven. Section 4 addresses the case $\lambda_{dd}>0$ and proves Theorem \ref{theorem:finite_survival_ext}.

\section{Logarithmic growth}
We show that the growth of the set of infected vertices is at most logarithmic if $G=(\Z^d,\E^d)$ and $\lambda_{dd}=0$. For notational convenience and simplicity assume $\lambda=\lambda_{aa}=\lambda_{ad}=\lambda_{da}$ and $\delta=0$, thus we have only one type of infection event and no recoveries. 
In \cite[Proposition~1.3~(ii)]{M. Hilario} it was shown via a coupling with an \emph{iterative bond percolation model} that the Richardson($\mu$) model grows sub-linearly if $\mu$ satisfies (G). We will adapt this idea to construct an iterated percolation process $C_n$ which grows approximately linearly and is coupled to $\bfxi_t$ such that $\bfxi_{n(t)}\subset C_{n(t)} $ holds and $n(t)$ grows faster than linear, and even exponentially if $\varrho$ satisfies (S). The specifics of our model however require some care in the construction. In particular, instead of deriving an iterative bond percolation model as an upper bound we couple our model with an \emph{iterated site percolation model} on a new macroscopic grid. The choice of site percolation stems from the fact that it is the activity of the sites (or more precisely the potentially long dormant periods of the sites) that slows down the growth of the infection process. Moreover, we will apply the Dynkin--Lamperti theorem (see e.g. \cite[below~(9.1)]{Erickson}  or 
\cite[Chapter XIV.3]{Feller}), which allows us to improve the result from sub-linear to at most logarithmic growth.
\begin{theorem}[Dynkin--Lamperti theorem]\label{theorem:Dynkin_Lamperti}
	Let $\varrho$ satisfy (S) and  $C_\alpha'=\frac{1}{\Gamma(\alpha)\Gamma(1-\alpha)}$, then
	\begin{equation*}
	\lim_{t\to\infty}\P\bigg(\frac{E^\varrho(t)}{t}\leq x\bigg)=\int_0^x\frac{C_\alpha'}{y^\alpha(y+1)}\d y,\quad \forall x>0.
	\end{equation*}
\end{theorem}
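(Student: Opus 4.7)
The statement is the classical Dynkin--Lamperti theorem on the asymptotic distribution of the excess time in a heavy-tailed renewal process. The cleanest course of action in the paper is simply to cite \cite[below~(9.1)]{Erickson} or \cite[Chapter~XIV.3]{Feller}, which is in fact what the authors already do by stating the result as a named theorem. If a self-contained proof were desired, I would follow the standard renewal-theoretic/Tauberian route sketched below.

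First, by conditioning on the position of the last renewal $S_{N(t)}$ before $t$, one writes
$$\P(E^\varrho(t)>tx)=\int_{[0,t]}\varrho\bigl([t(1+x)-s,\infty)\bigr)\,\d U(s),$$
where $U(s):=\sum_{n\geq 0}\P(S_n\leq s)$ denotes the renewal measure of $\cS$. This identity reduces the problem to understanding the joint asymptotics of the tail $\varrho([r,\infty))=L(r)r^{-\alpha}$ and the renewal measure $U$.

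Second, under condition (S), the Karamata Tauberian theorem yields
$$U(t)\sim\frac{t^\alpha}{\Gamma(1+\alpha)\Gamma(1-\alpha)L(t)},\qquad t\to\infty.$$
Substituting $s=tu$ with $u\in[0,1]$ and invoking the uniform convergence theorem for regularly varying functions (so that $\varrho([t(1+x-u),\infty))/\varrho([t,\infty))\to(1+x-u)^{-\alpha}$ uniformly on compact subsets of $(0,1)$), a dominated-convergence argument passes the limit inside the integral. The right-hand side converges to
$$\frac{\sin(\pi\alpha)}{\pi}\int_0^1\frac{(1-u)^{\alpha-1}}{(1+x-u)^\alpha}\,\d u,$$
and Euler's reflection formula $\Gamma(\alpha)\Gamma(1-\alpha)=\pi/\sin(\pi\alpha)$ identifies the prefactor with $C_\alpha'$.

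Third, the change of variables $y=(1-u)/(1+x-u)$ or, equivalently, $1+x-u=(1+x)/(1+y)$ followed by a routine simplification, transforms the integrand into $C_\alpha' y^{-\alpha}(y+1)^{-1}$ on $y\in[0,x]$, yielding the stated formula. The one real technical obstacle is the justification of dominated convergence near the endpoint $u=1$, where both the renewal measure and the tail $\varrho([t(1+x-u),\infty))$ become singular; this is precisely what the uniform convergence theorem for regularly varying functions combined with standard Blackwell/Tauberian estimates delivers. Since all of this is executed carefully in \cite{Feller,Erickson}, I would not reproduce the argument in the paper and simply reference those sources.
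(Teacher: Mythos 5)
Your conclusion --- simply cite the classical result --- is exactly what the paper does: the theorem is stated without proof, with references to Erickson and Feller, so the approaches coincide. A small bookkeeping slip in your sketch, though: with the substitution $s=tu$, the Karamata estimate for the renewal measure contributes the factor $u^{\alpha-1}$, not $(1-u)^{\alpha-1}$, so the limit should read $\frac{\sin(\pi\alpha)}{\pi}\int_0^1 u^{\alpha-1}(1+x-u)^{-\alpha}\,\d u$; the combination you wrote, with numerator $(1-u)^{\alpha-1}$ against denominator $(1+x-u)^\alpha$, already diverges at $x=0$, whereas $(1-u)^{\alpha-1}(x+u)^{-\alpha}$ is what you would get from the alternative substitution $s=t(1-u)$. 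The change of variable reducing the corrected Beta-type integral to the stated density is $y=(1+x-u)/u$, i.e.\ $u=(1+x)/(1+y)$, which carries $[0,1]$ onto $[x,\infty)$ and gives $\int_x^\infty y^{-\alpha}(1+y)^{-1}\,\d y$, matching the tail of the Dynkin--Lamperti law.
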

We start by collecting some general results on percolation. For some Bernoulli site percolation $\cP$ on $\Z^d$ with respect to the $\ell_\infty$-norm and parameter $p$ we write $x\leftrightarrow y$ for $x,y\in \Z^d $ if there exists some open path from $x$ to $y$. More precisely, for a collection $\cP=\{B_x\}_{x\in \Z^d}$ of independent Bernoulli random variables with probability $p$ and $x,y\in \Z^d$ we write $x\leftrightarrow y$ if there exists a sequence of sites $x=x_0,x_1,\dots,x_n=y$ such that $||x_i-x_{i-1}||_\infty=1$ and $B_{x_i}=1$ for all $i=1,\dots,n$. Moreover, for $A\subset\Z^d$ we denote by 
\begin{equation*}
    \cC(A):=\bigcup_{x\in A}\{y \colon x\leftrightarrow y\}
\end{equation*}
the union of $A$ with all open clusters under $\cP$ which intersect with $A$ and write $x\leftrightarrow A:=\{\exists y\in A:x\leftrightarrow y\}$. We denote by $\partial B(n)$ the surface of the ball with $\ell_\infty$-radius $n$ centred at the origin and by
\begin{equation*}
    p_c^s:=p_c^{s,\infty}(\Z^d):=\inf\{p \colon \P_p(|\cC(\{0\})|=\infty)>0\}
\end{equation*}
the critical parameter for site percolation. The choice of the $\ell_\infty$-norm is for technical reasons and we will need it below in the grid construction. Then it follows e.g.\ from~\cite[Theorem 1.1]{DCT15} that  for any $p<p_c^{s,\infty}(\Z^d)$ there exists some $\psi(p,d)>0$ such that
	\begin{equation}\label{exp_decay}
	\P_p(0\leftrightarrow\partial B(n))\leq e^{-n\psi(p,d)}\quad\text{for all }n.
	\end{equation}

We now introduce iterated site percolation. For an independent family of Bernoulli site percolation models $\cP_i$ with parameter $p<p_c^s$ and $A\subset \Z^d$  let $\cC_i(A)$ be the union of $A$ and all open clusters under $\cP_i$ which intersect with $A$.
Define $\overline{A}$ as the union of $A$ with its $\ell_\infty$-neighbourhood $\cN(A)=\{z\in \Z^d: \Vert z-x \Vert_\infty=1~\text{for some}~x\in A\}$. For any initial finite $C_0$ we define an increasing sequence of sets by
\begin{equation*}
    C_n=\overline{\cC_n(C_{n-1})}.
\end{equation*}

\begin{lemma}\label{nearly_linear_grow_RN}
	Assume $p<p_c^s(\Z^d)$, $C_0\subset \Z^d$ is finite and let $R_n=\max\{\Vert x \Vert_\infty  \colon x\in C_n\}$, then almost surely
	\begin{equation*}
	    \liminf_{n\to\infty}\frac{R_n}{n}\geq 1\quad\text{and}\quad\limsup_{n\to\infty}\frac{R_n}{n(\ln n)^a}=0\quad\text{for any }a>1.
	\end{equation*}
\end{lemma}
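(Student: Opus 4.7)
The plan is to prove the two bounds separately and with rather asymmetric effort. The lower bound is essentially deterministic: since $C_n\supseteq\overline{C_{n-1}}=C_{n-1}\cup\cN(C_{n-1})$, one unit of $\ell_\infty$-radius is added at each step regardless of the percolations, so $R_n\geq R_0+n$ and $\liminf_n R_n/n\geq 1$ follows without any probabilistic input.

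For the upper bound I would set up the one-step recursion
\[
R_n\leq R_{n-1}+1+M_n,\qquad M_n:=\max_{z\in B(R_{n-1})} D_z^{(n)},
\]
where $D_z^{(n)}$ denotes the maximal $\ell_\infty$-distance from $z$ to any vertex of its open $\cP_n$-cluster. Indeed, any new vertex in $\cC_n(C_{n-1})\setminus C_{n-1}$ is reached by a $\cP_n$-path from some $z\in C_{n-1}\subseteq B(R_{n-1})$, and the outer $\overline{\,\cdot\,}$-operation only adds a unit layer. Translation invariance together with the subcritical exponential decay~\eqref{exp_decay} then yields $\P(D_z^{(n)}\geq k)\leq e^{-k\psi}$ uniformly in $z$, with $\psi=\psi(p,d)>0$.

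The genuine obstacle is removing the circular dependence of $M_n$ on the random radius $R_{n-1}$; I would handle this by a bootstrap. Fix $c>1$ and define the deterministic event
\[
G_n:=\Bigl\{\max_{z\in B(n^c)} D_z^{(n)}\leq b\ln n\Bigr\}.
\]
A union bound combined with \eqref{exp_decay} gives $\P(G_n^c)\leq C\,n^{cd}\,n^{-b\psi}$, which is summable once $b$ is chosen sufficiently large (depending on $c$, $d$, $\psi$). By Borel--Cantelli and the independence of the family $(\cP_n)_n$, almost surely there is a random index $N$ such that $G_n$ holds for every $n\geq N$.

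Finally, on this full-measure event I would close the induction. Assuming $R_{n-1}\leq n^c$ (immediate at $n=N$ after enlarging $N$ so that $R_N$ fits into $N^c$), under $G_n$ one has $M_n\leq b\ln n$ and hence $R_n\leq R_{n-1}+1+b\ln n$; telescoping gives $R_n=O(n\ln n)$ almost surely, and since $c>1$ this stays well below $n^c$ for all large $n$, so the induction sustains itself. The resulting bound $R_n=O(n\ln n)$ a.s.\ then implies $R_n/(n(\ln n)^a)=O((\ln n)^{1-a})\to 0$ for every $a>1$, which is the desired $\limsup=0$ statement.
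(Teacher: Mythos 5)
Your lower bound is exactly the paper's (deterministic, $R_n\geq R_0+n$). For the upper bound the paper uses the \emph{conditional} Borel--Cantelli lemma directly on the events $A_{n+1}=\{R_{n+1}\geq R_n+\eta\ln R_n\}$: using \eqref{exp_decay} one gets $\P(A_{n+1}\mid\cF_n)\leq c'R_n^{-2}\leq c'n^{-2}$, and summability plus conditional Borel--Cantelli gives $R_{n+1}\leq R_n+\eta\ln R_n$ eventually. You instead try to avoid the circular dependence on $R_{n-1}$ by freezing it at a deterministic level $n^c$ and running ordinary Borel--Cantelli on the events $G_n$. That is a legitimate idea, but as written it has a genuine gap at the base case.

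The gap: after Borel--Cantelli you get a random $N_1$ with $G_n$ holding for all $n\geq N_1$, but to \emph{start} the induction you need some $n_0\geq N_1$ with $R_{n_0}\leq (n_0+1)^c$. You assert this is ``immediate after enlarging $N$'', but enlarging $N$ does not help: $R_N$ is nondecreasing in $N$, and between $N_1$ and any later $n_0$ you have no control on its growth (precisely because the induction hasn't yet kicked in --- if $R_{n-1}>n^c$, the event $G_n$ tells you nothing about the clusters emanating from $C_{n-1}$). So the statement ``a.s.\ there is $n_0$ with $R_{n_0}\leq(n_0+1)^c$'' is exactly the kind of a priori polynomial bound on $R_n$ that the whole lemma is trying to establish, and it cannot be taken for granted. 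The issue is fixable (for instance by first proving $\E[R_n]=O(n\log n)$ via the recursion $\E[R_n]\leq\E[R_{n-1}]+1+C\log\E[R_{n-1}]$ and then applying Markov's inequality to get $\P(R_{n_0}>(n_0+1)^c)\to 0$, combined with independence of $\{G_n\}_{n>n_0}$ from $R_{n_0}$), but that extra argument must be supplied. The paper's conditional Borel--Cantelli route sidesteps the bootstrap entirely, which is exactly why it is the cleaner choice here: it exploits the trivially available lower bound $R_n\geq n$ to make the conditional probabilities summable, rather than needing an a priori upper bound on $R_n$. A minor additional nit: you invoke ``independence of the family $(\cP_n)_n$'' for Borel--Cantelli, but the direction you use (summability implies finitely many occurrences) needs no independence.
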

\begin{proof}
	The lower bound is trivial, since in every step we add the whole neighbourhood to the set of nodes. In particular, $R_n\geq R_0+n$ for all $n\geq 1$ which implies the first claim. For the upper bound we follow \cite{M. Hilario} and consider the events
	$$
	A_{n+1}=\{R_{n+1}\geq R_n+\eta \ln R_n\}
	$$
	for some constant $\eta(p,d)>0$. Given the set $C_n$, on the event $A_{n+1}$ there must be some $x\in C_n$ with $\Vert x \Vert_\infty=R_n$ such that there exists a path $x\longleftrightarrow x+ \partial B(\eta \ln R_n-1)$ in $\cP_{n+1}$. Denote by $\cF_n$ the filtration $\cF_n=\sigma(\cP_i:i\leq n)$. Then~\eqref{exp_decay} yields
	\begin{equation*}
	    \P(A_{n+1}|\cF_n)\leq \sum_{x \colon \Vert x \Vert_\infty=R_n}\P(x\longleftrightarrow x+ \partial B(\eta \ln R_n-1)|\cF_n)\leq c R_n^{d-1}e^{-\psi(p,d)(\eta\ln R_n-1)}.
	\end{equation*}
	Choose $\eta(p,d)=\frac{d+1}{\psi(p,d)}$, which gives us $\P(A_{n+1}|\cF_n)\leq c' R_n^{-2}$. Since $R_n\geq n$ we get
	\begin{equation*}
	    \sum_{n\geq 1}\P(A_{n+1}|\cF_n)\leq c' \sum_{n\geq 1}\frac{1}{n^2}<\infty.
	\end{equation*}
	The conditional Borel--Cantelli Lemma (see e.g.\  \cite[Theorem~4.3.4]{Durrett}) ensures that 
	$$
	\P(\limsup_{n\to\infty}A_n)=0,
	$$
	which implies that there exists almost surely some random $n_1$ such that $R_{n+1}\leq R_{n}+\eta \ln R_n$ for $n\geq n_1$. The rest of the proof follows exactly as in \cite{M. Hilario}.
\end{proof}

We now prepare our coupling. To ensure that our infection process is dominated by an iterated percolation process, we construct a time sequence $(t_k)_{k\geq 0}$ and a set-valued discrete time process $(I_k)_{k\geq1}$ with the property that for all $k\geq 1$, from time $t_k$ to $t_{k+1}$ the set $I_k$ is surrounded by a double layer of sites that are dormant throughout the entire time interval. Since a double layer of such dormant sites cannot be traversed by the infection the construction of our sets $(I_k)_{k\geq 1}$ will guarantee that $\bfxi_{t_k}\subset I_k$ holds for all $k\geq 1$ if it holds initially for $k=1$. To make the approach of the double layer rigorous we introduce a macroscopic grid where we merge cubes of $2^d$ sites together and identify them with a node in a new grid $\widetilde{\Z}^d$. Formally, let $A=\{x\in \Z^d:x\geq 0, ||x||_\infty\leq 1\}$, be the cube consisting of $2^d$ sites in the first orthant containing the origin and all nearest neighbours in the first orthant with respect to the $\ell_\infty$-norm.
Moreover, by $A_{(i_1,\dots,i_d)}=A+2(i_1,\dots,i_d)$ we denote the  cube shifted by $2(i_1,\dots,i_d)$. Clearly, these cubes are disjoint, and we can identify every such cube with the site $(i_1,\dots,i_d)$ in the grid $\widetilde{\Z}^d$.
For a sequence of times $(t_k)_{k\geq 0}$ which we define later and $\ell\geq 1$ we say a cube $A_i:=A_{(i_1,\dots,i_d)}$ is $\ell$-bad if
\begin{equation*}
\cS_x\cap[t_{\ell-1},t_{\ell+1}]=\emptyset\quad\text{and}\quad \cU_x\cap[t_{\ell-1},t_{\ell})\neq\emptyset\quad \text{for all }x\in A_{(i_1,\dots,i_d)} ,
\end{equation*}
otherwise we call it $\ell$-good.  By definition, if a cube is $\ell$-bad, all sites of this cube will be dormant during the interval $[t_{\ell},t_{\ell+1}]$. Moreover, since dormant sites can only infect active neighbours, the infection will not traverse any $\ell$-bad cube during the interval $[t_{\ell},t_{\ell+1}]$. 
Hence, we make the following crucial observation: assume at time $t_\ell$ all infected sites are contained in $I_\ell\subseteq \Z^d$ where $I_\ell$ is the union of finitely many cubes. 
Let $I'_{\ell}$  be the union of $I_\ell$ with all $\ell$-good clusters intersecting $I_\ell$ and $I_{\ell+1}=\overline{I'_{\ell}}$ be the union of $I'_{\ell}$ with its neighbourhood. Then all infected sites at time $t_{\ell+1}$ are contained in $I_{\ell+1}$. Note that for this argument the choice of the $\ell_\infty$-norm is crucial.
\begin{proof}[Proof of Theorem \ref{theorem_sublinear_final}]
	What is left to show is the existence of a time sequence $(t_k)_{k\geq 0}$, such that the probability that a cube is $\ell$-good is sufficiently small and the sequence grows sufficiently fast. In particular, the probability that a cube is $\ell$-good should be smaller than the critical value $p_c^{s}>0$ for all $\ell\geq 1$. We start with the case where $\varrho$ satisfies (S). Given some $p'>0$, we can find by Theorem \ref{theorem:Dynkin_Lamperti} some $t_0$ and some constant $c$ such that
	\begin{equation*}
	\P(E^\varrho(t)\leq c t)\leq \frac{p'}{2}+\int_0^c\frac{C_\alpha'}{y^\alpha(y+1)}\d y\leq\frac{p'}{2}+\frac{p'}{2}=p'
	\end{equation*}
	holds for all $t\geq t_0$. Defining 
    $t_k:=(1+c)^{\frac{k}{2}} t_0$ for all $k\geq 1$, we get
	\begin{equation*}
	\P(S_x\cap [t_k,t_{k+2}]=\emptyset)=
    \P(E^\varrho_x(t_k)>c t_k)\geq 1-p'\quad\text{for all }k\geq 1.
	\end{equation*}
	Moreover, by enlarging $t_0$ if necessary, we have
	\begin{equation*}
	\P(\cU_x\cap[t_{k},t_{k+1})\neq\emptyset)=1-\e^{-\sigma(\sqrt{1+c}-1) (1+c)^{k/2}t_0}\geq 1-p'\quad\text{for all }k\geq 1.
	\end{equation*}
	Choosing $p'$ such that $(1-p')^{2^{d+1}}\geq 1-\frac{p_c^{s}}{2}$ yields 
    \begin{equation*}
        \P(A_i\text{ is $\ell$-good})\leq \frac{p_c^{s}}{2}\quad\text{for all }\ell\geq 1~\text{and}~i\in \widetilde{\Z}^d.
    \end{equation*}

    \medskip
	We establish a coupling with an iterated site percolation model as follows: Let $I_1$ be the set of cubes where at least on site is infected at time $t_1$ in the Richardson($\lambda$) model, i.e.\ we ignore all recoveries and the background up to time $t_1$ to avoid time-dependencies. To derive $I_2$, add all cubes to $I_1$ which are neighbours of $I_1$ with respect to the $\ell_\infty$-norm and $1$-good, add also $1$-good neighbours of added neighbours. This gives $I_1'$. Since we almost surely add only finitely many neighbours ($p<p_c^{s}$) the construction on $I_1'$ is well defined. Afterwards set $I_2=\overline{I_1'}$. This step is necessary, since every cube should only be asked once if it is $\ell$-good to avoid arising time-dependencies. Continue the construction for all $k$. Clearly, the so defined process can be coupled with an iterated site percolation model with probability $\frac{p_c^{s}}{2}$ and by our crucial observation from above
	\begin{equation*}
	    \bfxi_{t_k}\subset I_k\subset C_k\quad\text{for all }k\geq 1.
	\end{equation*}
	For some $s\geq t_0$ let $n=n(s)$ be the unique integer such that $s\in[t_{n-1},t_n)$. Since we consider the process without recoveries, $r_s\leq r_{t_n}$ holds, and by our coupling we have $r_{t_n}\leq R_n$. Moreover, Lemma \ref{nearly_linear_grow_RN} gives us that $R_n\leq n^a$ for fixed $a>1$ and sufficiently large $n$. By definition $s\geq t_{n-1}=(1+c)^{\frac{n-1}{2}}t_0$, thus we have $n(s)\leq 2\log_{(1+c)}\big(\frac{s}{t_0}\big)+1$ and conclude
	\begin{equation*}
	\frac{r_s}{(\Tilde{c} \ln(s))^a}\leq 1
	\end{equation*}
	for some constant $\Tilde{c}>0$. Since $a>1$ was arbitrary this finishes the proof of \eqref{log_wachstum}.	

    \medskip
    In case $\varrho$ only satisfies condition (G), we  define our time sequence differently, in fact the same way as it was done in~\cite{M. Hilario} but with a different starting value $t_0$. Let $t^*$ and $\varepsilon^*$ be the constants given in (G).
	For fixed $p'>0$ set  $t_{k+1}:=t_{k}+t_k^{\varepsilon^*}$ with $t_0\geq t^*$ large enough such that for all $k\geq 1$ we have $\P(\cU_x\cap[t_{k},t_{k+1})\neq\emptyset)\geq 1-p'$ and
	\begin{align*}
	\P(\cS_x\cap[t_{k},t_{k+2}]=\emptyset)&=1-\P(\cS_x\cap[t_{k},t_{k+1}]\neq \emptyset\cup \cS_x\cap[t_{k+1},t_{k+2}]\neq \emptyset)\notag\\
	&\geq 1-t_k^{-\varepsilon^*}-t_{k+1}^{-\varepsilon^*}\geq 1-2t_k^{-\varepsilon^*}\geq 1-p'.
	\end{align*}
	As before, $\P(A_i\text{ is $\ell$-good})<\frac{p_c^{s}}{2}$ for all $\ell\geq 1$ if we choose $p'$ small enough. With our coupling from above we get $r_s\leq n(s)^a$ for fixed $a>1$ and $s$ large enough.
	We follow \cite[p.~23]{M. Hilario} to estimate $n(s)\leq \Tilde{c}s^{1-\varepsilon^*}$ and \eqref{sublinear_growths} follows.
\end{proof}  
\begin{proof}[Proof of Proposition \ref{Proposition_survival}]
	Given $\sigma,\delta>0$ we prove that the process survives on $\T_d$ if $\lambda(\sigma,\delta)=\lambda_{aa}=\lambda_{ad}$ is large enough, using a comparison with a Galton--Watson process. It suffices to prove survival for $d=3$, since this assertion implies the same on any copy of $\T_3$ embedded in $\T_d$ if $d>3$.  So assume  w.l.o.g.\ that $d=3$, $\lambda_{dd}=\lambda_{da}=0$ and there is only one type of infection event. Choose some vertex, say $0\in V$, as the root and cut off one sub-tree from it. Let $\bar{V}$ be the remaining tree where all vertices have two children, say $c_1(x)$ and $c_2(x)$, and (apart from the root) one parent vertex, denoted by $p(x)$. For $x\in \bar{V}$ and $t\geq0$ let
	\begin{equation*}
	A(x,t):=\bigg\{\max_{i=1,2}\{E^\lambda_{\{x,c_i(x)\}}(t)\}<\min\{E_x^\delta(t),E_x^\sigma(t)\}\bigg\}
	\end{equation*}
	be the event that from time $t$ onwards there occur two infection events from $x$ to both children before there occurs a recovery or go-to-sleep symbol. Note that by the memoryless property of the exponential waiting times $A(x,t)$ is independent of $\sigma(\bfxi_t)$ and
	\begin{equation*}
	\P(A(x,t))=\P(A(0,0))=\frac{2\lambda}{2\lambda+\sigma+\delta}\frac{\lambda}{\lambda+\sigma+\delta}>\frac{1}{2}\quad\text{for all }x\in \bar{V}, t\geq 0,
	\end{equation*}
	if $\lambda$ is sufficiently large. For convenience set $A(x,\infty):=\emptyset$ and let
	\begin{equation*}
	t^a(x):=\inf\{t\geq 0: \bfxi_t^{0}(x)=(1,a)\}
	\end{equation*}
	be the first time $x$ is infected and active. We define a family of random variables by
	\begin{equation*}
	X_x:=2\cdot\id_{\{A(x,t^a(x)\}} \quad\text{for all }x\in \bar{V}.
	\end{equation*}
    The law of total probability and the independence of  $A(x,t)$ and $\sigma(\bfxi_t)$ give
    \begin{equation*}
        \P(X_x=2|t^a(x)<\infty)
        =\P(A(0,0))\quad \text{for all }x\in \bar{V}.
    \end{equation*}
	The coupling with a Galton--Watson process $(Z_k)_{k\geq 0}$ works as follows: Define recursively a set-valued process $(Y_k)_{k\geq 0}$ with $Y_0=\{0\}$ and 
	$$Y_k:=\{x\in \bar{V}:p(x)\in Y_{k-1},X_{p(x)}=2\}.$$
	By construction it also follows that $x\in Y_k$ implies $t^a(x)<\infty$ and every node $x$ in generation $k$ has $X_x$ offspring, where the $X_x$ are identically distributed and generationwise independent. Since $\E[X_x|x\in Y_k]=2\P(A(x,t))>1$ for all $x\in \bar{V}$ and $k\geq 0$, the process $Z_k:=|Y_k|$ is supercritical and survives with positive probability. On the survival event we have, due to our coupling, $t^a(x)<\infty$ for infinitely many $x\in \bar{V}$. Since our process cannot infect infinitely many vertices in finite time, it follows that $\P(\tau^{0}=\infty)>0$. 
\end{proof}

\section{Finite graphs}
We consider the special case where $G=(V,E)$ is a finite connected graph, $\lambda=\lambda_{dd}>0$  and $\varrho$ satisfies (S$^*$). We will derive two cardinality constraints \eqref{card_ass_survival} and \eqref{card_ass_ext}, which guarantee survival or extinction, respectively, and neither depend on the specific parameters $\lambda,\delta$ or $\sigma$ nor on the exact graph $G$. For the survival subsection \ref{subsection_survival} we can assume w.l.o.g.\ that all other infection rates apart from $\lambda_{dd}$ are zero. For the extinction part \ref{subsection_extinction} we assume for simplicity that $\lambda=\lambda_{dd}=\lambda_{ad}=\lambda_{da}=\lambda_{aa}$.

\subsection{Survival on finite graphs}\label{subsection_survival}

This subsection is based on ideas of \cite{RCP1} and \cite{RCP_finite} to construct a sequence of polynomially increasing time intervals with certain properties. Since we have more possible transitions in our model, we need a finer  subdivision and more careful control on the intervals. More precisely, we want to show that there exists a sequence of polynomially increasing time intervals which have an overlap (subset of the intersection) of sub-polynomial size such that with positive probability:
\begin{enumerate}
	\item In every interval there is one site which does not see any wake-up symbol.
	\item In every overlap there occurs no wake-up symbol for any site.
	\item In the first half of every overlap at all sites there arrives a go-to-sleep symbol.
	\item In the second half of the  overlap there are infection paths from every site to every other site. 
\end{enumerate}
Clearly, the process survives if we find such a sequence. We use the notation and basic construction of~\cite{RCP_finite} as far as possible, and indicate the necessary changes. Given any graph $G=(V,E)$ with $|V|>1/(1-\alpha)$ we choose $\varepsilon>0$ such that $\beta:=|V|(1-\alpha-3\varepsilon)>1$. For $l \geq 1$, a \emph{spanning path of length $l$} in $G$ is a sequence $g=(e_1,\ldots,e_l)$ of edges $e_i=(v_{i-1},v_i) \in E$ such that for any $x,y \in V$ there exist $i\in \{1,\ldots,l\}, j \geq 0$ and a subsequence $e_i,\ldots,e_{i+j}$ of $g$ such that $v_{i-1}=x$ and $v_{i+j}=y$. (I.e.\ the spanning path goes through each vertex at least once, even if it is not necessarily a path in the graph-theoretic sense.) We choose $\gamma>\max\{1,2l/\lambda,2/\sigma\}$, where $l$ describes the minimal length of a spanning path $g$. From now on the spanning path $g$ and $\gamma,\varepsilon>0$ are fixed. For $t>0$ we recursively define a family of random variables $(Y_i^t)_{0\leq i\leq l}$ with $Y_0^t=0$ and 
\begin{equation*}
Y^t_i=Y^t_{i-1}+E^\lambda_{e_i}(Y_{i-1}^t).
\end{equation*}
Note that from time $t$ onward there occur consecutive infection events along the whole spanning path $g$ within time $Y_l^t$. Thus, if some site is infected at time $t$ and all sites are dormant within the time interval $[t,t+Y^t_l]$, then the whole graph is infected at time $t+Y_l^t$. Moreover, for given $t>0$ we define the random variable $X^t$ by
\begin{equation*}
X^t:=\max_{x\in V}E_x^\sigma(t).
\end{equation*}
Thus $X_t$ is the maximum of $|V|$ exponentially distributed random variables with parameter $\sigma$ and describes the time until all sites have seen at least one wake-up symbol (starting from time $t$).
As in \cite{RCP_finite}, we define a polynomially increasing time sequence as follows: For $n\in \N$ let $b_n:=\gamma \log(n)$ and $c_n=b_n^{1+|V|(\alpha+\varepsilon)}$. Clearly, there exists $n_0\in\N$ such that $b_n c_n\leq\frac{n^\varepsilon}{2}$ for all $n\geq n_0$. We set
\begin{equation*}
t_n=\hat{t}_1+\sum_{j=n_0}^n j^\varepsilon-c_jb_j
\end{equation*}
where $\hat{t}_1$ is chosen large enough, such that $\P(E^{\varrho}(t)\leq 1)\leq \frac{1}{t^{1-\alpha-\varepsilon}}$ holds for all  $t\geq \hat t_1$ (which is possible due to (S$^*$), see Proposition 4.1 in \cite{RCP_finite}). Note that $t_{n+1}=t_n+(n+1)^\varepsilon-c_{n+1}b_{n+1}\geq t_n+\frac{(n+1)^\varepsilon}{2}$, $t_{n+1}-t_n\leq (n+1)^\varepsilon$ and $t_n>n$ for $n\geq n_0$. We consider intervals of the form $[t_{n-1},t_{n-1} + n^\varepsilon]$ where the intersection with the next interval is given by $[t_{n},t_{n}+c_{n}b_{n}]$. The overlap will be one of the $c_n$ sub-intervals of this intersection with size $b_n$. We now define four types of events:
\begin{align*}
A_n&:=\{\exists x\in V: E_x^{\varrho}(t_n)>(n+1)^\varepsilon\}\\
B_n&:=\{j\in [0,c_n)\cap \Z: E_x^\varrho(t_n+j b_n)>b_n~\forall x\in V\}\\
C_n&:=\bigcap_{j=0}^{c_n-1}\bigg\{Y_l^{t_n+\frac{2j+1}{2} b_n}\leq \frac{b_n}{2}\bigg\}\\
D_n&:=\bigcap_{j=0}^{c_n-1}\bigg\{X^{t_n+\frac{2j}{2} b_n}\leq \frac{b_n}{2}\bigg\}.
\end{align*}
In words, $A_n$ ensures that there is at least one site seeing no wake-up symbols during the interval $[t_n,t_n+(n+1)^\varepsilon]$. The event $B_n$ makes sure that within the interval $[t_n,t_n+c_n b_n]$ there is at least one sub-interval of size $b_n$ where no site sees any wake-up symbol. The event $D_n$ guarantees that every site sees a go-to-sleep symbol in the first half of each of the $c_n$ sub-intervals of size $b_n$ of $[t_n,t_n+c_n b_n]$. 
Accordingly, $C_n$ describes the event that in every second half of each of these sub-intervals the whole infection path $g$ is traversed by infection events. Note that these infection events can only transmit the infection if both involved sites are dormant. Thus, in contrast to \cite{RCP_finite}, the event $C_n$ alone does not ensure that the whole graph is infected at the end of any sub-interval of size $b_n$ if at least one site was infected initially. Only the events $B_n$, $C_n$ and $D_n$ together guarantee that there exists a sub-interval of size $b_n$ of $[t_n,t_n+c_n b_n]$  such that all sites are dormant during the second half of the interval and the whole graph is infected at the end of the interval if at least one site was infected at the beginning of the second half.

\medskip
Assume that initially one site,  say $0\in V$, is infected and define 
$N_m:=\{E_0^\delta(0)>t_m+c_mb_m\}.$
Clearly, $\P(N_m)=e^{-\delta(t_m+b_m c_m)} >0$ for every fixed $m$. Defining $\hat{A}_m:=\bigcap_{n\geq m} A_n$ for $m\in \N$ and $\hat{B}_m$, $\hat{C}_m$ as well as $\hat{D}_m$ accordingly, it is clear from construction that we have for all $m\in \N$
\begin{equation*}
\{\tau^{0}=\infty\}\supset \hat{A}_m\cap \hat{B}_m\cap\hat{C}_m\cap\hat{D}_m\cap N_m.
\end{equation*}
By Propositions 4.2, 4.3  and 4.5 of \cite{RCP_finite} we find some $m\in \N$ such that $\hat{A}_m$ and $\hat{B}_m$ and $\hat{C}_m$ have positive probability. Moreover, we get $\P(A_n^c)\leq \frac{1}{n^\beta}$ and $\P(B_n^c)\leq \frac{1}{n^\gamma}$ for $n$ sufficiently large.  Note that the proof of $\P(\hat{C}_m)>0$ has to be adapted to take into account that we only have half of the interval size $b_n$ and we changed the definition of $(Y_i^t)_{0\leq i\leq \ell}$. Our choice of $\gamma$ was made such that this still gives the required result. 

\begin{lemma}\label{lemma:infection_stairway}
	There exists some $m\in \N$ such that $\P(\hat{D}_m)>0$.
\end{lemma}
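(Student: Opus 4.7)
The plan is to exploit the fact that the go-to-sleep clocks $\{\cU_x\}_{x\in V}$ are genuine Poisson processes (with rate $\sigma$), so nothing from the renewal-theoretic machinery is actually needed here; this will be the easiest of the four ingredients $\hat A_m,\hat B_m,\hat C_m,\hat D_m$. By the memoryless property, for every deterministic $t\ge 0$ and every $x\in V$ the excess time $E^\sigma_x(t)$ is exponentially distributed with parameter $\sigma$, independent of the history of $\cU_x$ up to time $t$. Therefore
$$\P(X^{t}>s)=\P\Big(\max_{x\in V}E^\sigma_x(t)>s\Big)\le |V|\,e^{-\sigma s}$$
by a union bound, uniformly in $t$.

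I would then estimate $\P(D_n^c)$ by a second union bound over the $c_n$ sub-intervals. Taking $s=b_n/2=(\gamma/2)\log n$ gives
$$\P\bigl(X^{t_n+jb_n}>b_n/2\bigr)\le |V|\,e^{-\sigma b_n/2}=|V|\,n^{-\sigma\gamma/2}$$
for every $j\in\{0,\ldots,c_n-1\}$, and hence
$$\P(D_n^c)\le c_n\cdot|V|\cdot n^{-\sigma\gamma/2}=|V|\,(\gamma\log n)^{1+|V|(\alpha+\varepsilon)}\,n^{-\sigma\gamma/2}.$$
Recall that $\gamma$ was chosen with $\gamma>2/\sigma$, so $\sigma\gamma/2>1$; the polylogarithmic prefactor $(\gamma\log n)^{1+|V|(\alpha+\varepsilon)}$ is harmless, and the series $\sum_{n\ge n_0}\P(D_n^c)$ is finite.

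Finally, I would conclude with the Borel--Cantelli lemma: since $\sum_n\P(D_n^c)<\infty$, we have $\P(D_n^c \text{ i.o.})=0$, equivalently $\P(\hat D_m)=\P\bigl(\bigcap_{n\ge m}D_n\bigr)\to 1$ as $m\to\infty$. In particular $\P(\hat D_m)>0$ for all sufficiently large $m$, which is the claim. There is no real obstacle: the Poisson nature of $\cU_x$ bypasses the renewal issues entirely, and the choice $\gamma>2/\sigma$ fixed earlier was precisely designed so that the exponential gain $e^{-\sigma b_n/2}$ beats both the polynomial number $c_n$ of sub-intervals and the size $|V|$ of the graph. One could alternatively note that the relevant events depend on disjoint portions of $\{\cU_x\}$ (because $t_n+c_nb_n<t_{n+1}$), giving independence and hence the stronger second Borel--Cantelli; but independence is not needed and the first Borel--Cantelli suffices.
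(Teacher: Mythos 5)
Your proof is correct and reaches the same conclusion via a slightly different route. The paper computes $\P(\hat D_m)$ \emph{exactly} as a product: first $\P(X^t\le b_n/2)=(1-e^{-\sigma b_n/2})^{|V|}$ using independence of the $|V|$ sites, then $\P(\hat D_m)=\prod_{n\ge m}(1-e^{-\sigma b_n/2})^{|V|c_n}$ using independence across the disjoint sub-intervals, and finally shows the product is positive by bounding $\log(1-x)\ge -cx$ and observing that $\sum_n c_n\,n^{-\sigma\gamma/2}<\infty$ because $\sigma\gamma/2>1$. You instead take union bounds over both the $|V|$ sites and the $c_n$ sub-intervals to get $\P(D_n^c)\le |V|\,c_n\,n^{-\sigma\gamma/2}$, sum, and invoke the first Borel--Cantelli lemma to get $\P(\hat D_m)\to 1$. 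The two arguments hinge on the same two ingredients (memorylessness of the Poisson clock $\cU_x$, so $E^\sigma_x(t)\sim\mathrm{Exp}(\sigma)$ uniformly in $t$, plus the choice $\gamma>2/\sigma$), but your union-bound/Borel--Cantelli route is a little more robust: it needs no independence across sub-intervals (in the paper's product formula this independence is used but not remarked upon — it holds because the intervals $[t_n+jb_n,\,t_n+jb_n+b_n/2]$ are pairwise disjoint by the construction $t_n+c_nb_n\le t_{n+1}$). The paper's product form is slightly more informative but requires that disjointness check; your version trades it for a harmless extra factor of $|V|c_n$ that the exponent $\sigma\gamma/2>1$ absorbs anyway.
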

\begin{proof}
	For fixed $t>0$ we have
	\begin{equation*}
	\P(X^t\leq b_n/2)=\P\bigg(\max_{x\in V}E_x^\sigma(t)\leq b_n/2\bigg)=(1-\e^{\frac{-\sigma b_n}{2}})^{|V|}
	\end{equation*}
	and
	\begin{equation*}
	\P(\hat{D}_m)=\P\bigg(\bigcap_{n\geq m}\bigcap_{j=0}^{c_n-1}\{X^{t_n+\frac{2j}{2} b_n}\leq \frac{b_n}{2}\}\bigg)=\prod_{n\geq m}(1-\e^{\frac{-\sigma b_n}{2}})^{|V| c_n}.
	\end{equation*}
	Note that if we fix $\delta>0$ there exists some constant $c>0$ such that $\log(1-x)>-cx$ for all $x\in(0,\delta)$. Therefore, taking the logarithm we obtain, that for some constant $c>0$
	\begin{align*}
	\log\bigg(\prod_{n\geq m}(1-\e^{\frac{-\sigma b_n}{2}})^{|V| c_n}\bigg)&=|V|\sum_{n\geq m}c_n\log(1-\e^{\frac{-\sigma b_n}{2}})
	>-c |V|\sum_{n\geq m}c_n n^{\frac{-\sigma \gamma}{2}}.
	\end{align*}
	By our choice of $\gamma$ we have $\frac{\sigma\gamma}{2}>1$ and by definition of $c_n$ the sum converges, which yields the result.
\end{proof}

\begin{proof}[Proof of Theorem \ref{theorem:finite_survival_ext} part  \eqref{card_ass_survival}]
	Fix some $\delta,\lambda,\sigma>0$. We just have to bound the probability $\P(\hat{A}_m\cap \hat{B}_m\cap\hat{C}_m\cap\hat{D}_m\cap N_m)$ away from zero. Let $m\in \N$ be large enough such that the probability of every single event is bigger than zero. By the independence of the different renewal processes of the construction (Poisson processes included) we have
	\begin{equation*}
	\P(\hat{A}_m\cap \hat{B}_m\cap\hat{C}_m\cap\hat{D}_m\cap N_m)=\P(\hat{A}_m\cap \hat{B}_m) \P(\hat{C}_m)\P(\hat{D}_m)\P(N_m).
	\end{equation*}
 	Observing that
	\begin{equation*}
	1-\P(\hat{A}_m\cap \hat{B}_m)\leq \sum_{n\geq m}\P(A_n^c)+\sum_{n\geq m}\P(B_N^c),
	\end{equation*}
	we may enlarge $m$ such that $\P((\hat{A}_m\cap \hat{B}_m)^c)<1$, and the proof is complete.
\end{proof}

\subsection{Extinction on finite graphs} \label{subsection_extinction}
To show that the condition in \eqref{card_ass_ext} implies extinction we adapt again the ideas from \cite{RCP_finite} and construct a sequence of intervals $([S_{n},S_{n+1}])_{n\geq 1}$ with (random) length $X_n\geq \hat{t}$ such that in every interval there occurs at least one wake-up symbol at time $S_n+X_{n+1,x}$ for every node $x\in V$. Here $\hat{t}  >0$ is a constant from \cite[Proposition~3.2]{RCP_finite}. However, a crucial difference in our model consists in the fact that a site which
wakes up does not have to recover immediately. Therefore we give an argument involving good and bad boxes below. But first, for the sake of completeness, we state the definitions of the random variables $(S_n)_{n\geq 0}$, $(X_n)_{n\geq 1}$ and $(X_{n,x})_{n\geq 1,x\in V}$ according to \cite[p.~1749--50]{RCP_finite}. Let $v_0\in V$ be the initially infected vertex and define 
\begin{equation*}
    X_{1,x}:=\begin{cases}
        E_x^\varrho(0)\quad&\text{if } x=v_0,\\
        0&\text{else.}
    \end{cases}
\end{equation*}
Set $S_1:=X_1:=\max\{X_{1,x}:x\in V\}$, $x_1:=\text{argmax}\{X_{1,x}:x\in V\}$, and $W_{1,x}:=X_1-X_{1,x}$ for all $x\in V$. The rest of the construction works inductively. Assume we have constructed the random variables $X_{m,x}$, $W_{m,x}$, $X_m$, $S_m$ and $x_m$ for all $m=1,\dots, n$ and all $x\in V$, then define
\begin{equation*}
    X_{n+1,x}:=\begin{cases}
        0 \quad&\text{if } x=x_n,\\
        E_x^\varrho(S_n)&\text{if }x\neq x_n \text{ and }W_{n,x}\geq \hat{t},\\
        E_x^\varrho(S_n+\hat{t})+\hat{t}&\text{if }x\neq x_n \text{ and }W_{n,x}<\hat{t}.
    \end{cases}
\end{equation*}
As above, we set $X_{n+1}:=\max\{X_{n+1,x}:x\in V\}$ and $W_{n+1,x}:=X_{n+1}-X_{n+1,x}$ for all $x\in V$, as well as  $S_{n+1}:=S_n+X_{n+1}$ and $x_{n+1}:=\text{argmax}\{X_{n+1,x}:x\in V\}$. A key result of \cite{RCP_finite} is that
\begin{equation}\label{eq:no_infinite_intervalls}
\P(\lim_{n\to\infty} X_n=\infty)=0.
\end{equation}
The proof requires the assumption \eqref{card_ass_ext} and some technical propositions \cite[Propositions 3.1--3.3]{RCP_finite} which we omit here, since no modifications are needed for our model. Given the construction and equation \eqref{eq:no_infinite_intervalls} it is straightforward to show that our process dies out. 
\begin{proof}[Proof of Theorem \ref{theorem:finite_survival_ext} part \eqref{card_ass_ext}]
    Fix some $\delta,\lambda>0$ and some initially infected site $x\in V$. Recall that in this subsection we assume w.l.o.g.\ that all infection rates are $\lambda$. In particular, we have only one type of infection event. Define
	\begin{equation*}
	\Omega^*:=\{\omega \in \Omega \colon \liminf_{n\to\infty}X_n(\omega)<\infty\}\quad\text{and}\quad\Omega^*_m:=\{\omega \in \Omega \colon \liminf_{n\to\infty}X_n(\omega)<m\}
	\end{equation*} for $m\in \N$.  Clearly, $\Omega^*_m\uparrow\Omega^*$ and $\Omega^*_m=\{\omega \colon \exists \{ N_i \colon i\geq 1 \} \subseteq \N \colon N_i\to\infty, X_{N_i}<m \}.$ Let $\varepsilon>0$ be arbitrary and note that by continuity of measures and \eqref{eq:no_infinite_intervalls}, there exists some $\hat{m}$ such that 
	\begin{equation*}
	\P(\Omega^*_m)\geq 1-\varepsilon\quad\text{for all }m\geq \hat{m}.
	\end{equation*}
	Moreover, we have for all $m\in \N$
	\begin{equation*}
	\P(\tau^x=\infty)\leq \P(\{\tau^x=\infty\}\cap \Omega^*_m)+\P((\Omega^*_m)^c).
	\end{equation*}
	We will show that the first probability is zero for every $m\in \N$. Since $\varepsilon>0$ was arbitrary, this suffices to conclude the proof. We say an interval $[S_{n-1},S_{n}]$ is \textit{bad} if:
	\begin{enumerate}
		\item For every wake-up time $X_{n,x}$ with $x\in V$ (these are not necessarily all wake-up times in the interval but $|V|$ many) we have $E_x^{\delta}(X_{n,x})<\min\{1,E_x^\sigma(X_{n,x})\}$. In words: After each of the wake-up symbols $X_{n,x}$ there occurs a recovery symbol at site $x$ within one unit of time and before there arrives a go to sleep symbol.
		\item There occurs no infection event in the whole interval for all sites.
	\end{enumerate}
	An interval which is not bad we call \textit{good}. Clearly, if there occurs one bad interval the infection dies out. Therefore, by construction,
	$\{\tau^x=\infty\}\subset \bigcap_{n\geq 1}\{[S_{n-1},S_n]\text{ is good}\}.$ Observe, given $X_n=S_n-S_{n-1}<m$, we have
	\begin{equation*}
	\P([S_{n-1},S_n]\text{ is bad})\geq \e^{-\lambda m|V|}(1-\e^{-(\delta+\sigma)})^{|V|}\frac{\delta}{\delta+\sigma}=:p_m>0.
	\end{equation*}
	Recapitulate the product structure of our probability space $\Omega=\Omega^\varrho\times\Omega'$ and note that $S_n$ only depends on the renewal processes $\cS_x$ with $x\in V$. Hence, $\Omega_m^*=\Lambda_m\times \Omega'$ for some $\Lambda_m\subseteq \Omega^\varrho$ and for every $\omega_\varrho\in \Lambda_m$ we find (by definition of $\Omega_m^*$ and $S_n$) a sequence  $(N_i(\omega_{\varrho}))_{i\geq 1}$ such that $X_{N_i}=S_{N_{i+1}}-S_{N_i}<m$, $N_{i+1}-N_i>1$ and $S_{N_{i+1}}-S_{N_i+1}>1$. Using the product structure of the measure $\P=\P^\varrho\times \P'$ yields
	\begin{align*}
	\P(\{\tau^x=\infty\}\cap \Omega^*_m)&=\int_{\Lambda_m}\P'(\tau^x(\omega_\varrho,\cdot)=\infty)\d \P^\varrho(\omega_\varrho).
	\end{align*}
	For $\omega_\varrho\in \Lambda_m$, the events $(\{[S_{N_i}(\omega_\varrho),S_{N_i+1}(\omega_\varrho)]\text{ is good}\})_{i\in \N}$ are independent and
	\begin{align*}
	\P'(\tau^x(\omega_\varrho,\cdot)=\infty)&\leq \P'\bigg(\bigcap_{n\in \N}\{[S_{n-1}(\omega_\varrho),S_{n}(\omega_\varrho)]\text{ is good}\}\bigg)\\
	&\leq \prod_{i\in \N}\P'\big(\{[S_{N_i}(\omega_\varrho),S_{N_i+1}(\omega_\varrho)]\text{ is good}\}\big)\leq \prod_{i\in \N}(1-p_m)=0.
	\end{align*}
\end{proof}

\subsection*{Acknowledgements} The authors thank Marco Seiler for interesting discussions and useful comments. We thank two anonymous reviewers for very helpful comments. Funding acknowledgements by the third author: This paper was supported by the János Bolyai Research Scholarship of the Hungarian Academy of Sciences. Project no.\ STARTING 149835 has been implemented with the support provided by the Ministry of Culture and Innovation of Hungary from the National Research, Development and Innovation Fund, financed under the STARTING\_24 funding scheme.

\end{document}